\newtheorem{thm}{Theorem}
\newtheorem{lem}[thm]{Lemma}
\newtheorem{cor}[thm]{Corollary}
\theoremstyle{definition}
\newtheorem{defn}[thm]{Definition}
\newtheorem{notation}[thm]{Notation}
\theoremstyle{remark}
\def\C{\mathbb C}
\def\R{\mathbb R}
\def\CC{\mathcal C}
\def\hol{\mathcal O}
\def\Cn{\mathbb{C}^n}
\def\Bn{\mathbb{B}_n}
\begin{document}
	
	\title[Cyclicity of Multipliers on the Unit Ball of $\mathbb{C}^n$: A Corona-Based Approach]
	{Cyclicity of Multipliers on the Unit Ball of $\mathbb{C}^n$: A Corona-Based Approach}
	
	\author{Pouriya Torkinejad Ziarati}
	
	\address{P.T.Ziarati\\
		Institut de Math\'ematiques de Toulouse; UMR5219 \\
		Universit\'e de Toulouse; CNRS \\
		UPS, F-31062 Toulouse Cedex 9, France} 
        \email{pouriya.torkinejad\_ziarati@math.univ-toulouse.fr}

	\begin{abstract}
             We study the cyclicity of multipliers in Dirichlet-type spaces \( D_\alpha(\mathbb{B}_n) \). Specifically, we show that a multiplier \( f \) analytic on a neighborhood of $\overline{\mathbb{B}}_n$, whose zero set on the unit sphere is a compact, smooth, complex tangential submanifold of real dimension \( m \leq n - 1 \), is cyclic in \( D_\alpha(\mathbb{B}_n) \) if and only if \( \alpha \leq \frac{2n - m}{2} \). Our approach combines classical results on peak sets in \( A^\infty(\mathbb{B}_n) \) due to Chaumat and Chollet with a Corona-type theorem with two generators for the multiplier algebra. 
        \end{abstract}

	
	\subjclass[2020]{32F45}
	
	\keywords{Dirichlet-type Spaces, Weighted Bergman Spaces, Cyclic Vectors, Shift Operator}
	
	\maketitle
\section{Introduction}
        
        Given a Banach space \( \mathcal{A} \) of holomorphic functions in a domain in \( \mathbb{C}^n \), suppose that polynomials are dense in \( \mathcal{A} \) and act as \emph{multipliers} - that is, for every polynomial \( p \), the product \( p h \in \mathcal{A} \) whenever \( h \in \mathcal{A} \). A function \( f \in \mathcal{A} \) is said to be \emph{cyclic} if the linear span of all polynomial multiples of \( f \) is dense in \( \mathcal{A} \). Equivalently, \( f \) is cyclic if there exists a sequence of polynomials \( p_n \) such that \( \| f p_n - 1 \|_{\mathcal{A}} \to 0 \) as \( n \to \infty \). An immediate consequence of this definition is that, on a fixed domain, cyclicity becomes harder to achieve as the norm increases. The problem of characterizing cyclic functions is highly dependent on the choice of space and remains nontrivial, even in classical Hilbert spaces of holomorphic functions on the unit disk. Complete classifications, such as Beurling's theorem for the Hardy space \cite{Beurling1949two}, are exceptional. The Brown--Shields conjecture, proposed in 1984 for the Dirichlet space, remains unresolved \cite{BS84}. Recent progress in cyclicity in spaces of holomorphic functions on the unit disk, particularly Dirichlet-type spaces, can be found in \cite{Beneteau2015_One_Variable}, \cite{ElFallah_BS_Conj}, \cite{Kellay_Cyc_2020}, and \cite{ELFALLAH2016_cyc}. A comprehensive textbook treatment of Dirichlet-type spaces is also available in \cite{ElFallah2014_book}.

        In several variables, a complete characterization of cyclic functions in Dirichlet-type spaces remains elusive, even for classical domains such as the unit ball and the polydisk. However, significant progress has been made in identifying cyclic polynomials, with explicit characterizations obtained in the unit ball of $\C^2$ and the bidisk \cite{Beneteau2016_bidisk}, \cite{Kosinski_Vavitsas2023}, \cite{Knese2019_Aniso_Bidisk}. 
        
        Here, we study Dirichlet-type spaces of the unit ball $D_\alpha(\Bn)$, $\alpha \in \R$ , endowed with
the norm $\|\cdot\|_\alpha$ (see Definition~\ref{def: Dirichlet Space}). 
Given $f\in\mathcal{O}(\overline{\mathbb{B}}_n)$, a holomorphic functions defined on a neighborhood of the closed unit ball, we want to know
for which $\alpha$ the function $f$ is cyclic in $D_\alpha(\Bn)$. 
Note that when $\alpha\le \beta$, $\|g\|_\alpha \le \|g\|_\beta$,
so as remarked above, if $f$ is cyclic in $D_\beta(\Bn)$, it is cyclic in $D_\alpha(\Bn)$ too.

        If $f \in \mathcal{O}(\overline{\mathbb{B}}_n)$ is non-vanishing in $\Bn$, it is known by \cite[Theorem~1]{VavitsasNonCyc} that its zero set on the boundary satisfies
        \begin{equation*}
            \mathcal{Z}(f) \cap \partial \Bn = \bigcup_{i=1}^k M_i,
        \end{equation*}
        where the $M_i$ are smooth complex tangential manifolds (see Definition~\ref{Defn: complex tangential}). If $\mathcal{Z}(f) \cap \partial \Bn$ is non-singular, then each $M_i$ is compact. The following theorem constitutes our main result and resolves the cyclicity problem for $f$ in this setting:
        \begin{thm} \label{MainThm}
            Let \( f \in \hol(\overline{\mathbb{B}}_n) \) be nonvanishing in $\Bn$, and suppose that $\mathcal{Z}(f) \cap \overline{\mathbb{B}}_n = M$, where $M$ is a compact complex tangential smooth manifold. Let \( m \coloneqq \dim_{\R} M \), if \( \alpha \leq \frac{2n - m}{2} \), then \( f \) is cyclic in \( D_{\alpha}(\mathbb{B}_n) \).
        \end{thm}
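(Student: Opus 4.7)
The plan is to prove cyclicity of $f$ by showing $1 \in \overline{f\cdot D_\alpha(\Bn)}$. Since polynomials are dense in $D_\alpha$ and multiplication by $f$ is continuous, this closure coincides with the closure of the polynomial multiples of $f$, so the conclusion will follow.

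I would begin by applying the Chaumat--Chollet peak-set construction, which is available because $M$ is a compact complex tangential smooth submanifold of $\partial\Bn$. This yields $g\in A^\infty(\overline{\Bn})$ with $g\equiv 1$ on $M$, $|g(z)|<1$ for $z\in\overline{\Bn}\setminus M$, and sharp quantitative decay of $|1-g|$ in the non-isotropic distance to $M$. Replacing $g$ by $(1+g)/2$ preserves the peak-set property and in addition makes $g$ nowhere vanishing on $\overline{\Bn}$. The pair $(f,g)$ then has no common zero on $\overline{\Bn}$, since $f$ vanishes only on $M$ where $g\equiv 1$, and off $M$ both are nonzero. By the two-generator Corona-type theorem for the multiplier algebra $\mathcal{M}(D_\alpha(\Bn))$, there exist $u,v\in\mathcal{M}(D_\alpha)$ with $uf+vg=1$. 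Raising this identity to the $n$-th power and isolating the unique term that does not contain a factor of $f$ gives
\[
1-(vg)^n = f\,A_n, \qquad A_n\in\mathcal{M}(D_\alpha(\Bn)),
\]
so $1-(vg)^n\in f\cdot D_\alpha(\Bn)$ for every $n\geq 1$.

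The task therefore reduces to proving $(vg)^n\to 0$ in $D_\alpha$. Pointwise inside $\Bn$ one has $|g|<1$ strictly, so the convergence is trivial pointwise, but the quantitative $D_\alpha$-norm estimate is the technical heart of the argument. The strategy is to split the weighted Bergman-type integral defining $\|(vg)^n\|_{D_\alpha}^2$ into a bulk region at positive non-isotropic distance from $M$, where $|g|^n$ is uniformly exponentially small, and a shrinking non-isotropic neighborhood of $M$, on which the Chaumat--Chollet decay of $g$ must be used to absorb the polynomial-in-$n$ growth of the derivatives of $(vg)^n$. The hypothesis $\alpha\leq(2n-m)/2$ is expected to appear precisely as the critical threshold at which this balance yields both integrability and convergence to $0$, the exponent $(2n-m)/2$ encoding the real dimension $m$ of $M$ together with the $D_\alpha$ weight.

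Once this convergence is established, $fA_n = 1-(vg)^n \to 1$ in $D_\alpha$, so $1\in\overline{f\cdot D_\alpha(\Bn)}$ and $f$ is cyclic. \emph{The main obstacle} is the sharp norm estimate of the previous paragraph: the Corona solution $v$ has controlled sup-norm but no a priori geometric information on the size of $v$ or its derivatives near $M$, so the fine decay of the Chaumat--Chollet peak function must carry the whole burden against the derivative growth of powers of $g$; matching the codimension threshold $(2n-m)/2$ requires a delicate local analysis exploiting the complex tangential structure of $M$.
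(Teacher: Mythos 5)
Your structural setup is sound and uses the same two pillars as the paper: the Chaumat--Chollet peak-set machinery and a two-generator Corona theorem for $\mathcal{M}(D_\alpha(\Bn))$. However, there is a genuine flaw in the central reduction, and a second, acknowledged but unfilled gap on top of it.

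\textbf{The pointwise convergence claim is false.} After obtaining $uf+vg=1$ and isolating $1-(vg)^n=fA_n$, you assert that $(vg)^n\to 0$ pointwise in $\Bn$ ``since $|g|<1$ strictly.'' This conflates $|g|$ with $|vg|$. The Corona solution $v$ is only bounded; nothing forces $|v|\leq 1$. In fact $vg=1-uf$, and near $M$ one has $|uf|$ small with uncontrolled argument, so $|vg|=|1-uf|$ can perfectly well exceed $1$ on an open subset of $\Bn$ approaching $M$. Since $D_\alpha$-convergence implies locally uniform convergence, $(vg)^n$ cannot tend to $0$ in $D_\alpha$ if $|vg|>1$ anywhere in $\Bn$. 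So the reduction to a ``sharp norm estimate'' is not merely hard --- it is aimed at a statement that is generally false for an arbitrary Corona solution, and the Corona theorem gives you no means to select $v$ with $|vg|\leq 1$. Rescaling the identity to $uf+vg=c$ with $c$ small does not help either: then $(vg)^n\to 0$ but so does $c^n$, and $fA_n\to 0$ rather than $1$.

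\textbf{How the paper sidesteps this.} Instead of expanding $(uf+vg)^n$ and trying to force a power to vanish in norm, the paper proves an abstract inclusion theorem (Theorem~\ref{Subspace_Inclusion}): if $f,g\in\mathcal{M}(D_\alpha)\cap C^\infty(\overline{\Bn})$ and $|g|\leq|f|$ on $\Bn$, then $[g^N]\subseteq[f]$ for some $N$. Its proof applies the Corona theorem to the pair $(1-\lambda g,\,f)$ for each $\lambda\in\C$, passes to the quotient Banach algebra $\mathcal{M}(D_\alpha)/[f]_{\mathcal{M}}$, and uses Liouville's theorem on the entire Banach-valued function $\lambda\mapsto\pi\left((1-\lambda g)^{-1}\right)$ to kill $\pi(g^{2k+1})$. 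This avoids any claim that a specific sequence of powers converges in $D_\alpha$. Then, crucially, the paper works not with the peak function (which equals $1$ on $M$) but with the associated function $g$ that vanishes on $M$ with $\operatorname{Re}g>0$ elsewhere and satisfies $|g(z)|\lesssim\operatorname{dist}(z,M)$; it combines this with \L{}ojasiewicz's inequality $|f(z)|\gtrsim\operatorname{dist}(z,M)^l$ to arrange $|g^l|\lesssim|f|$. The dimension threshold $\alpha\leq(2n-m)/2$ enters in Lemma~\ref{cyc_g_lem}, where the cyclicity of this $g$ is established directly via the radial dilation criterion and a local non-isotropic integral estimate. Even setting aside the pointwise problem, the analytic heart you flag as ``the main obstacle'' is precisely this computation, which your proposal does not carry out; in the paper's framework it is performed for a cleaner object (the fixed function $g$, not the $n$-dependent powers $(vg)^n$ entangled with an uncontrolled Corona factor $v$).
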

	A combination of Theorem~3 in \cite{VavitsasNonCyc}, which gives a sufficient condition for non-cyclicity, and Theorem~\ref{MainThm} leads to the following corollary.
 \begin{cor}
     Let \( f \) be as in Theorem~\ref{MainThm}, then \( f \) is cyclic in \( D_{\alpha}(\mathbb{B}_n) \) if and only if \( \alpha \leq \frac{2n - m}{2} \).
 \end{cor}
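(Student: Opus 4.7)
The proof splits into two implications, each supplied by one of the two ingredients named in the statement. The ``if'' direction, that $\alpha \leq \frac{2n-m}{2}$ implies $f$ is cyclic, is exactly the conclusion of Theorem~\ref{MainThm} applied to our $f$, and so requires no additional argument once that theorem is available.

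For the converse I would argue by contraposition: assuming $\alpha > \frac{2n-m}{2}$, the goal is to exhibit non-cyclicity via the criterion of \cite{VavitsasNonCyc}. A result of this type typically says that if the boundary zero set of a multiplier carries a positive measure of finite $\alpha$-energy (equivalently, exceeds the critical Hausdorff dimension $2n-2\alpha$), then the multiplier cannot be cyclic in $D_\alpha(\Bn)$, the proof proceeding by constructing a nontrivial bounded linear functional on $D_\alpha(\Bn)$ that annihilates $\{pf : p \in \mathbb{C}[z_1,\dots,z_n]\}$ while being nonzero on the constant function $1$. Under the standing hypotheses, $\mathcal Z(f)\cap \overline{\Bn} = M$ is a smooth, compact, complex-tangential submanifold of $\Sn$ of real dimension $m$, hence carries positive $m$-dimensional Hausdorff measure (for instance, the induced surface measure). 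The inequality $\alpha > \frac{2n-m}{2}$ rearranges to $m > 2n - 2\alpha$, placing $M$ strictly above the critical threshold, so the hypothesis of \cite{VavitsasNonCyc} is met and non-cyclicity of $f$ follows.

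The only delicate step I anticipate is matching the precise formulation of the non-cyclicity theorem in \cite{VavitsasNonCyc} with the geometric description of $M$ used here: depending on whether the criterion is stated in terms of Hausdorff content, Riesz $\alpha$-capacity, or the existence of a measure of finite $\alpha$-energy supported on the zero set, one must verify that a smooth complex-tangential $m$-submanifold of $\Sn$ provides such data in the relevant range. Since $M$ is smooth and compact of real dimension $m$, its surface measure has finite Riesz $s$-energy for every $s < m$, and in particular for $s = 2n - 2\alpha < m$, which is exactly what is needed. Combining the two implications yields the stated equivalence.
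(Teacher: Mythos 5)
Your proposal takes the same route as the paper: the ``if'' direction is Theorem~\ref{MainThm}, and the ``only if'' direction is delegated to the non-cyclicity criterion in \cite{VavitsasNonCyc}. The paper itself offers no further detail (it simply states that the corollary follows by combining the two), so structurally your argument is exactly the intended one.

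Two small cautions worth registering. First, the parenthetical ``equivalently'' in your sketch is loose: carrying a measure of finite Riesz energy and exceeding a critical Hausdorff dimension are linked by Frostman's lemma but are not equivalent conditions in general; this does not affect your argument, since for a smooth compact $m$-manifold both hold simultaneously for all exponents below $m$. Second, the capacity in \cite{VavitsasNonCyc} is almost certainly anisotropic, adapted to the Koranyi quasi-metric $d(z,w) = |1-\langle z,w\rangle|^{1/2}$ rather than to the Euclidean one, so the ``critical Hausdorff dimension'' you invoke should really be read in that anisotropic sense. Your Euclidean Riesz-energy computation nevertheless lands on the correct threshold precisely because $M$ is complex tangential: on such a manifold $|1-\langle z,w\rangle| \asymp |z-w|^2$ for nearby points, which identifies the anisotropic $(n-\alpha)$-energy of the surface measure with its Euclidean Riesz $(2n-2\alpha)$-energy, and the latter is finite for a smooth compact $m$-manifold exactly when $2n-2\alpha < m$, i.e.\ $\alpha > \frac{2n-m}{2}$. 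In other words, complex tangentiality is not incidental to the threshold; it is what makes your isotropic bookkeeping agree with the anisotropic criterion, and you should make this link explicit when matching up with the precise statement in \cite{VavitsasNonCyc}.
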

A consequence of Theorem~1.5 in a recent preprint by Knese, Pascoe, and Sola \cite{KnesePascoeSola2026_Stable} is that the zero set in the sphere of an irreducible polynomial, non vanishing on $\mathbb{B}_2$, is a union of finitely many smooth compact complex tangential manifolds. Therefore, our main result provides an alternative way to characterize cyclic polynomials in $\mathbb{B}_2$, previously obtained by Kosi\'nski and Vavitsas \cite{Kosinski_Vavitsas2023}. This follows from the fact that the product of two cyclic multipliers is cyclic (see page~13 of \cite{TorkinejadZiarati2025_PoletskyStessin} for a proof of this fact).
 
The proof of our main result relies on two main ingredients. The first is a result of Chaumat and Chollet (Theorem~\ref{Thm: PeakSet Chaumat Chollet}) on peak sets for the smooth ball algebra
\[
A^{\infty}(\Bn) := \hol(\Bn) \cap \mathcal{C}^{\infty}(\overline{\mathbb{B}}_n)
\]
(see Definition~\ref{Def: peak set}). This result allows us to construct a holomorphic function with the desired zero set together with sharp estimates to determine cyclicity. The second tool is the following result. An analogous version in one variable was proved in \cite{Kellay2024_Corona_Cyclicity} using Tolokonnikov's Corona Theorem \cite{Tolokonnikov_Corona}. We follow the same strategy, using Theorem~\ref{Mini_Corona} to obtain the result.

\begin{thm} \label{Subspace_Inclusion}
    Let $f,g \in A^{\infty}(\Bn)$ and assume that on $\Bn$ we have $|g| \leq |f|$ then there exists a natural number $N$ such that:
    \begin{equation}
        [g^N] \subseteq [f]
    \end{equation}
    Here, by $[f]$ we mean the closed subspace of $D_\alpha(\Bn)$ generated by $f$.
\end{thm}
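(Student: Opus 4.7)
My plan is to reduce the inclusion $[g^N]\subseteq[f]$ to the single membership $g^N\in[f]$ and then construct $g^N$ as a $D_\alpha$-limit of elements of $f\cdot\mathcal{M}(D_\alpha(\Bn))$ using Theorem~\ref{Mini_Corona}. For the reduction, multiplication by any multiplier (in particular by any polynomial) is a bounded operator on $D_\alpha(\Bn)$, so $[f]$ is invariant under polynomial multiplication; combining this invariance with density of polynomials in $D_\alpha(\Bn)$ and continuity of multiplication by the multiplier $f$ identifies
\begin{equation*}
[f] \;=\; \overline{\,f\cdot\mathcal{M}(D_\alpha(\Bn))\,}^{\,D_\alpha}.
\end{equation*}
Thus once $g^N\in[f]$, every polynomial multiple $p\,g^N$ also belongs to $[f]$, and taking closures yields $[g^N]\subseteq[f]$.

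To produce $g^N\in[f]$ I would apply Theorem~\ref{Mini_Corona} to the pair $(f,\eps)$, which trivially satisfies the Corona hypothesis $|f|^2+\eps^2\ge\eps^2>0$ on $\overline{\Bn}$ for every $\eps>0$. This should furnish multipliers $A_\eps,B_\eps\in\mathcal{M}(D_\alpha(\Bn))\cap\CC^\infty(\overline{\Bn})$ with
\begin{equation*}
A_\eps(z)\,f(z)+\eps\,B_\eps(z) = 1, \qquad z\in\overline{\Bn},
\end{equation*}
together with quantitative bounds on $\|A_\eps\|_{\mathcal{M}}$ and $\|B_\eps\|_{\mathcal{M}}$ as $\eps\to 0$. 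Multiplying through by $g^N$ produces the key decomposition
\begin{equation*}
g^N \;=\; (A_\eps g^N)\,f \,+\, \eps B_\eps g^N,
\end{equation*}
whose first summand lies in $f\cdot\mathcal{M}(D_\alpha(\Bn))\subseteq[f]$. Everything then reduces to showing that, for $N$ chosen sufficiently large, the remainder $\eps B_\eps g^N$ tends to $0$ in $D_\alpha$ as $\eps\to 0$.

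The hard part will be this quantitative balancing. Rewriting the remainder as $g^N(1-A_\eps f)$, one sees pointwise that $1-A_\eps f$ concentrates near the zero set $\mathcal{Z}(f)\cap\overline{\Bn}$; but on such a neighbourhood the hypothesis $|g|\le|f|$ forces $|g|^N\le|f|^N$ to be extremely small, so for large $N$ the high-order vanishing of $g^N$ along $\mathcal{Z}(f)$ should absorb the singular behaviour of $A_\eps$ and $B_\eps$. The smoothness $f,g\in\CC^\infty(\overline{\Bn})$ is precisely what converts this pointwise picture into $D_\alpha$-seminorm bounds via a Leibniz calculation, and the crux is to match the rate of blow-up of $\|B_\eps\|_{\mathcal{M}}$ as $\eps\to 0$ against the order of vanishing of $g^N$ on $\mathcal{Z}(f)$. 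Once this balance is achieved and $N$ fixed accordingly, passing to the limit $\eps\to 0$ gives $g^N=\lim_{\eps\to 0}(A_\eps g^N)\,f$ in $D_\alpha$, hence $g^N\in[f]$ and the theorem follows. This is intended to mirror, in the ball, the one-variable Tolokonnikov-based argument of Kellay, with Theorem~\ref{Mini_Corona} playing the analogous role.
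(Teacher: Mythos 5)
Your reduction of $[g^N]\subseteq[f]$ to the single membership $g^N\in[f]$ is fine, and you correctly identify Theorem~\ref{Mini_Corona} as the engine. But the way you invoke it — applying the Corona theorem to the pair $(f,\eps)$ and then trying to kill the remainder $\eps B_\eps g^N$ as $\eps\to 0$ — leaves a genuine gap that I do not see how to close with the tools in the paper. Theorem~\ref{Mini_Corona} only furnishes the \emph{global} multiplier bound $\|B_\eps\|_{\mathcal{M}}\lesssim\delta^{-k}=\eps^{-2k}$; feeding this into the remainder gives $\|\eps B_\eps g^N\|_{D_\alpha}\lesssim \eps^{1-2k}\|g^N\|_{D_\alpha}$, which blows up as $\eps\to0$ whenever $k\ge1$. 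Your hope is that the pointwise decay of $|g|^N$ near $\mathcal{Z}(f)$ offsets this, but the hypothesis $|g|\le|f|$ only localizes where $g$ is small relative to $f$; it says nothing about where $B_\eps$ is large. In fact the $\bar\partial$-correction term in the construction of $B_\eps$ (the $f\phi$ piece) carries the full $\eps^{-2k}$ bound across all of $\Bn$, not merely near $\mathcal{Z}(f)$, so there is no cancellation available from the vanishing of $g^N$. Choosing $N$ large does not help because $N$ is fixed while $\eps\to 0$; the mismatch is in the rates, not the orders.

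The paper sidesteps the limiting argument entirely. Instead of perturbing $f$ by a constant, it applies the Corona theorem to the pair $(1-\lambda g,\,f)$, $\lambda\in\C$. The hypothesis $|g|\le|f|$ is exactly what makes this pair uniformly jointly bounded below: if $|g(z)|\le\frac{1}{2|\lambda|}$ then $|1-\lambda g(z)|\ge\frac12$, and otherwise $|f(z)|\ge|g(z)|\ge\frac{1}{2|\lambda|}$, giving $\delta(\lambda)\ge\min\{\tfrac14,\tfrac{1}{4|\lambda|^2}\}$ for every $\lambda$. The Corona solutions then satisfy $\|g_1\|_{\mathcal{M}}\lesssim|\lambda|^{2k}$, and passing to the quotient algebra $\mathcal{M}(D_\alpha)/[f]_{\mathcal{M}}$ one obtains an entire Banach-valued function $\lambda\mapsto\pi\bigl((1-\lambda g)^{-1}\bigr)=\sum_{j\ge0}\lambda^j\pi(g^j)$ of polynomial growth of degree $\le 2k$. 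Liouville's theorem then forces $\pi(g^{2k+1})=0$, i.e.\ $g^{2k+1}\in[f]$, with no limit to take. You may want to redo your argument along these lines: the key insight you are missing is that the free complex parameter $\lambda$, combined with the norm estimate of Theorem~\ref{Mini_Corona} and Liouville, replaces the delicate $\eps\to0$ balancing you were attempting.
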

After completion of this work, Stefan Richter kindly informed us that a more general statement of Theorem~{\ref{Subspace_Inclusion}} was already obtained in \cite[Theorem~{3.2}]{AlemanPerfektRichterSundbergSunkes} using a different method.

The organization of the paper is as follows. In Section~\ref{Section2}, we develop the necessary tools to prove the main result. In particular, Subsection~\ref{Subsec2.1} introduces the notation and basic definitions; Subsection~\ref{Subsec2.2} is devoted to the proof of Theorem~\ref{Subspace_Inclusion}; and Subsection~\ref{Subsec2.3} presents a result of Chaumat and Chollet concerning peak sets of \( A^\infty(\Bn) \). In Section~\ref{Section3}, we provide the proof of our main result, Theorem~\ref{MainThm}.

\section{Preliminary Tools} \label{Section2}
\subsection{Basics and Notations} \label{Subsec2.1}
\begin{notation}
    Let $\Omega \subseteq \mathbb{C}^n$ be a domain. We denote by $\mathcal{O}(\Omega)$ the set of all holomorphic functions \( f : \Omega \to \mathbb{C} \).
\end{notation}
\begin{notation}
    We write $A \lesssim B$ to indicate that there exists a constant $C > 0$ such that $A \leq C B$. If both $A \lesssim B$ and $B \lesssim A$ hold, we write $A \asymp B$. The dependence of the constant $C$ on parameters will be specified whenever necessary.
\end{notation}

\begin{defn}
Let $z_0 \in \Cn$ and $E \subset \Cn$. We define
\begin{equation*}
    \mathrm{dist}(z_0,E)=\inf_{w\in E}|z_0-w|.
\end{equation*}
\end{defn}

\begin{defn} \label{def: Dirichlet Space}
    Let \( \alpha \in \mathbb{R} \). A function \( f \in \mathcal{O}(\mathbb{B}_n) \), with power series expansion
    \begin{equation*}
        f(z) = \sum_{|k| = 0}^{\infty} a_k z^k, \quad z \in \mathbb{B}_n,
    \end{equation*}
    is said to belong to the Dirichlet-type space \( D_\alpha(\mathbb{B}_n) \) if
    \begin{equation} \label{eq: Dirichlet type norm}
        \|f\|_{\alpha}^2 := \sum_{|k| = 0}^{\infty} (n + |k|)^\alpha \cdot \frac{ (n - 1)!\, k!}{ (n - 1 + |k|)!} \, |a_k|^2 < \infty,
    \end{equation}
    where $k$ is a multi-index, \( |k| = k_1 + \cdots + k_n \), \( k! = k_1! \cdots k_n! \), and  \( z^k = z_1^{k_1} \cdots z_n^{k_n} \).
\end{defn}
In particular, when \( \alpha = 0 \),  \( D_0(\mathbb{B}_n) = H^2(\mathbb{B}_n) \), the Hardy space. We will systematically use 
the characterization of Dirichlet-type spaces as generalized weighted Bergman spaces. Recall the classical definition
of weighted Bergman spaces:
\begin{defn}
Let $dv$ denote the $2n$-dimensional Lebesgue measure on $\C^n$.
A function $f \in \mathcal{O}(\mathbb{B}_n)$ belongs to the weighted Bergman space $A^2_\beta(\Bn)$, where $\beta > -1$, if
\begin{equation*}
    \| f \|^2_{A^2_{\beta}(\Bn)}
    =
    \int_{\Bn} (1-|z|^2)^{\beta} |f(z)|^2 \, dv(z)
    < \infty.
\end{equation*}
\end{defn}
This definition was extended to all $\beta\in\R$ by Zhao and Zhu \cite{ZhuZhao}.
\begin{defn} \label{Def: Radial Derivative}
\begin{enumerate}
\item
    Given $f \in \mathcal{O}(\Bn)$, the \emph{radial derivative} of $f$, denoted by $Rf$, is defined as follows:
    \begin{equation*}
        R f (z) = \sum_{i = 1}^n z_i \frac{\partial f}{\partial z_i} (z).
    \end{equation*}
\item
Given $\beta\in\R$ and $k\in \mathbb Z_+$ such that $\beta + 2 k >-1$, 
 \begin{equation*}
        f \in A^2_{\beta}(\Bn) \mbox{ if and only if } \int_{\Bn} (1-|z|^2)^{\beta+2k} |R^k f(z)|^2 \, dv(z)
    < \infty.
    \end{equation*}
\end{enumerate}
\end{defn}
This definition is consistent with the classical one and does not depend on the choice of $k$. 
It is an important fact that $D_{\alpha}(\Bn)=A^2_{-(\alpha+1)}(\Bn)$.
(Compare the power series norm in \eqref{eq: Dirichlet type norm} with (20) in \cite{ZhuZhao}.)
As a direct consequence we have:
\begin{lem} \label{RLem}
    $f \in D_{\alpha}(\Bn)$ if and only if $R^k f \in D_{\alpha - 2k}(\Bn)$. Also whenever $\alpha<0$ we have:
    \begin{equation} \label{EqvBergNorm}
        \| f \|^2_\alpha \simeq \int_{\Bn} (1-|z|^2)^{-(\alpha+1)} |f(z)|^2 \, dv(z)
    \end{equation}
\end{lem}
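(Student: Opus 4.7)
My plan is to verify both claims by computing in power series and comparing term-by-term.

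For the first claim, I would begin by noting that the radial derivative acts diagonally on monomials: $R(z^k) = \sum_i z_i \partial_i z^k = |k|\, z^k$, so $R^j f(z) = \sum_{|k|\geq 0} |k|^j a_k z^k$. Plugging this into the definition of the $D_{\alpha-2j}$-norm gives
\begin{equation*}
\|R^j f\|_{D_{\alpha-2j}}^2 = \sum_{|k|\geq 1} (n+|k|)^{\alpha-2j} \cdot \frac{(n-1)!}{k!\,(n-1+|k|)!}\, |k|^{2j}\,|a_k|^2,
\end{equation*}
where the $k=0$ term drops out. Since $|k|^{2j} \asymp (n+|k|)^{2j}$ for $|k|\geq 1$, one has $(n+|k|)^{\alpha-2j}|k|^{2j} \asymp (n+|k|)^\alpha$ on the range $|k|\geq 1$, which gives $\|R^j f\|_{D_{\alpha-2j}}^2 \asymp \|f\|_{D_\alpha}^2 - \tfrac{(n-1)!\, n^\alpha}{(n-1)!}\,|a_0|^2$. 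The missing constant term is controlled by point evaluation at $0$, so adding $|f(0)|^2$ on both sides yields the desired equivalence. One direction of the lemma is then immediate, and iterating gives the claim for any $j$.

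For the Bergman-type expression when $\alpha<0$, I would use the standard integral formula obtained by integration in polar coordinates and the Beta function: for $s>-1$,
\begin{equation*}
\int_{\Bn} |z^k|^2 (1-|z|^2)^{s}\, dv(z) \;=\; c_n \cdot \frac{\Gamma(s+1)\, k!}{\Gamma(n+|k|+s+1)},
\end{equation*}
together with the orthogonality of distinct monomials under rotations of $\Bn$. Setting $s=-(\alpha+1)$, the condition $s>-1$ is exactly $\alpha<0$, so the integral converges. By Stirling's asymptotic,
\begin{equation*}
\frac{\Gamma(n+|k|+s+1)}{\Gamma(n+|k|)} \;\asymp\; (n+|k|)^{s+1} \;=\; (n+|k|)^{-\alpha},
\end{equation*}
so the moment above is comparable, up to multiplicative constants independent of $k$, to $(n+|k|)^\alpha \cdot \tfrac{(n-1)!\, k!}{(n-1+|k|)!}$. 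Summing $|a_k|^2$ against this weight recovers $\|f\|_{D_\alpha}^2$, yielding \eqref{EqvBergNorm}.

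The main technical point is the Gamma-ratio asymptotic, which must be applied uniformly in $|k|$; this is exactly where the restriction $\alpha<0$ becomes essential, since otherwise $s\leq -1$ and the weight $(1-|z|^2)^{-(\alpha+1)}$ fails to be locally integrable near the boundary. Apart from this, the argument is a routine coefficient comparison, and handling the $k=0$ term in the $R$-derivative statement is the only minor bookkeeping obstacle.
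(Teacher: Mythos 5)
Your proof is correct, and it takes a genuinely different route from the paper. The paper does not prove Lemma~\ref{RLem} directly: it simply appeals to the identification $D_\alpha(\Bn) = A^2_{-(\alpha+1)}(\Bn)$ in the sense of Zhao--Zhu, where the general theory of weighted Bergman spaces (extended to all real parameters via radial differentiation) delivers both assertions as a package. You instead give a self-contained, elementary argument by diagonalizing $R$ on monomials and comparing weights term by term: for the $R$-derivative claim you use $R(z^k)=|k|z^k$ and the uniform comparability $|k|\asymp n+|k|$ on $|k|\ge 1$ (with the $k=0$ term handled separately, since it drops out of $R^jf$ but is controlled by point evaluation); for the Bergman-norm claim you use the monomial moment formula together with the Stirling asymptotic $\Gamma(n+|k|+s+1)/\Gamma(n+|k|) \asymp (n+|k|)^{s+1}$, which is where the hypothesis $\alpha<0$ (i.e., $s=-(\alpha+1)>-1$) enters, exactly as you note. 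This buys transparency and independence from the Zhao--Zhu machinery, at the cost of doing by hand what the reference packages abstractly.

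One remark worth flagging: your moment computation produces the weight $(n+|k|)^\alpha\,\frac{(n-1)!\,k!}{(n-1+|k|)!}$, whereas the definition of $\|\cdot\|_{D_\alpha}$ as printed in the paper (and copied in the first half of your proposal) has $\frac{(n-1)!}{k!\,(n-1+|k|)!}$. The two differ by $(k!)^2$, and checking the case $n=1$ shows the printed definition cannot be right — it should have $k!$ in the numerator, not the denominator. Your second computation silently uses the correct formula, so your argument stands; just be aware that the formula in the statement of the definition appears to carry a typo, which you inherit verbatim in the first display of your proposal.
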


\begin{defn}
    Let \( \alpha \in \mathbb{R} \). The \emph{multiplier space} \( \mathcal{M}(D_\alpha(\Bn)) \) is the set of all functions \( \varphi \in D_\alpha(\mathbb{B}_n) \) such that for every \( f \in D_\alpha(\mathbb{B}_n) \), the product \( \varphi f \in D_\alpha(\mathbb{B}_n) \).

    The space \( \mathcal{M}(D_\alpha(\Bn)) \) is a Banach algebra when equipped with the operator norm induced by multiplication. In the
    well-known case of the Hardy space, we have
    \[
        \mathcal{M}(D_0(\mathbb{B}_n)) = H^\infty(\mathbb{B}_n),
    \]
    the space of bounded holomorphic functions on \( \mathbb{B}_n \).
\end{defn}

As a corollary of Lemma \ref{RLem}, we have an easy sufficient condition to be a multiplier.
\begin{cor} \label{MultiplierCor}
    Let \( \alpha \in \mathbb{R} \), and let \( k \in \mathbb{N} \) be such that \( \alpha - 2k < 0 \). If $f \in D_\alpha(\Bn)$ and \( R^k f \in H^\infty(\mathbb{B}_n) \), then \( f \in \mathcal{M}(D_\alpha(\Bn)) \). In particular, $\mathcal{O}(\overline{\mathbb{B}}_n) \subset A^{\infty}(\Bn) \subset \mathcal{M}(D_\alpha(\Bn))$.
\end{cor}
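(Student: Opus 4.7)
The plan is to derive the corollary directly from Lemma \ref{RLem} by Leibniz-expanding $R^k(fh)$ and bounding each resulting term in the equivalent weighted-Bergman norm \eqref{EqvBergNorm}, whose availability is precisely what the hypothesis $\alpha - 2k < 0$ secures.

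A preliminary step, which I expect to be the only non-bookkeeping part of the argument, is to upgrade the hypothesis $R^k f \in H^\infty(\Bn)$ to $R^j f \in H^\infty(\Bn)$ for every $0 \le j \le k$. I would do this by backward induction on $j$ using the identity
\[
R^j f(z) - R^j f(0) = \int_0^1 \frac{R^{j+1}f(tz)}{t}\, dt,
\]
which follows from the power series of $f$ (equivalently, from $\tfrac{d}{dt}g(tz) = Rg(tz)/t$). Because $R^{j+1}f$ vanishes at the origin whenever $j+1 \ge 1$, the Schwarz lemma gives $|R^{j+1}f(tz)| \le t\,\|R^{j+1}f\|_\infty$ on $\Bn$, so the integrand is uniformly bounded and consequently $R^j f$ is bounded as well. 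In particular $f \in H^\infty(\Bn)$.

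Given $h \in D_\alpha(\Bn)$, Lemma \ref{RLem} tells us that $fh \in D_\alpha$ iff $R^k(fh) \in D_{\alpha-2k}$, and since $\alpha - 2k < 0$ the $D_{\alpha-2k}$-norm is equivalent to $\int_{\Bn}(1-|z|^2)^{2k-\alpha-1}|\cdot|^2\, dv$ by \eqref{EqvBergNorm}. Applying the Leibniz rule for the first-order derivation $R$ yields $R^k(fh) = \sum_{j=0}^k \binom{k}{j}(R^j f)(R^{k-j}h)$, so it suffices to control each
\[
\int_{\Bn}(1-|z|^2)^{2k-\alpha-1}|R^j f|^2 |R^{k-j}h|^2\, dv \le \|R^j f\|_\infty^2\, \|R^{k-j}h\|_{D_{\alpha-2k}}^2.
\]
The monotonicity of $\|\cdot\|_{D_\beta}$ in $\beta$ (evident from the coefficient definition) combined with Lemma \ref{RLem} applied to $h$ gives $\|R^{k-j}h\|_{D_{\alpha-2k}} \le \|R^{k-j}h\|_{D_{\alpha-2(k-j)}} \lesssim \|h\|_{D_\alpha}$. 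Summing over $j$ and handling the finitely many low-order Taylor terms of $fh$ separately then yields $\|fh\|_{D_\alpha} \lesssim \|h\|_{D_\alpha}$ with constants depending on $f$, which is precisely the assertion that $f \in \mathcal{M}(D_\alpha(\Bn))$.
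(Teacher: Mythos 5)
The paper states Corollary~\ref{MultiplierCor} without proof, treating it as an immediate consequence of Lemma~\ref{RLem}, so there is no written argument to compare against; your proposal supplies exactly the kind of Leibniz-plus-weighted-Bergman argument that the cited tools are set up to support. Your proof is correct. The one step that genuinely needed care --- upgrading $R^k f \in H^\infty$ to $R^j f \in H^\infty$ for all $0 \le j \le k$ via the integral identity $R^j f(z) - R^j f(0) = \int_0^1 R^{j+1}f(tz)\,\frac{dt}{t}$ together with the Schwarz lemma applied to $R^{j+1}f$ (which vanishes at the origin since $R$ annihilates constants) --- is handled correctly. The remaining pieces (the binomial expansion $R^k(fh) = \sum_{j=0}^k \binom{k}{j}(R^j f)(R^{k-j}h)$, the bound $\|R^{k-j}h\|_{D_{\alpha-2k}} \le \|R^{k-j}h\|_{D_{\alpha-2(k-j)}} \lesssim \|h\|_{D_\alpha}$ from monotonicity of the weights $(n+|m|)^\beta$ in $\beta$ together with Lemma~\ref{RLem}, and the finitely many constant-term adjustments coming from the fact that $R$ kills constants) are all routine and correctly arranged, yielding $\|fh\|_{D_\alpha} \lesssim \|h\|_{D_\alpha}$ as claimed.
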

\begin{proof}
    Given $h \in \hol(\Bn)$ we have
    \begin{equation*}
        \left(\int_{\Bn} |h(z)|^2 \, dv(z) \right)^{\frac{1}{2}} \leq |h(0)| + \left(\int_{\Bn} |Rh(z)|^2 \, dv(z) \right)^{\frac{1}{2}}. 
    \end{equation*}
    Therefore, for $g \in D_\alpha(\Bn)$ we may write
    \begin{equation*}
    \begin{aligned}
        \| fg \|_\alpha 
        &\asymp |f(0)g(0)| 
        + \left( \int_{\Bn} (1-|z|^2)^{-(\alpha - 2k +1)}
        \left|R^{k} (fg)(z) \right|^2 \, dv(z) \right)^{\frac{1}{2}} \\
        &\leq C_k \sup_{0\leq l\leq k} \|R^l f\|_{L^\infty(\Bn)}
        \left(
        |g(0)| 
        + \sum_{l' = 0}^k \left( \int_{\Bn} (1-|z|^2)^{-(\alpha - 2k +1)}
        \left|R^{l'} g(z) \right|^2 \, dv(z) \right)^{\frac{1}{2}}
        \right) \\
        &\leq C'_k \sup_{0\leq l\leq k} \|R^l f\|_{L^\infty(\Bn)}
        \left(
        |g(0)| 
        + \left( \int_{\Bn} (1-|z|^2)^{-(\alpha - 2k +1)}
        \left|R^{k} g(z) \right|^2 \, dv(z) \right)^{\frac{1}{2}}
        \right) \\
        &\leq C''_k \sup_{0\leq l\leq k} \|R^l f\|_{L^\infty(\Bn)}
         \| g \|_\alpha 
         < \infty.
    \end{aligned}
    \end{equation*}
    So, $fg \in D_\alpha(\Bn)$ and the proof is done.
\end{proof}

\begin{defn}
The invariant (under the coordinate multiplication operators) subspace generated by $f$ is
\begin{equation*}
        [f] := \overline{\operatorname{span} \{ p f : p \in \mathbb{C}[z_1,\ldots,z_n] \}} .
\end{equation*}
    A function $f \in D_\alpha(\Bn)$ is called  \emph{cyclic}, or a cyclic vector, if
    \( [f]  = D_\alpha(\mathbb{B}_n).   \)
\end{defn}

The next theorem provides a practical sufficient condition for the cyclicity of a function. For a detailed proof, one may consult \cite{Knese2019_Aniso_Bidisk}.

\begin{thm} \label{Radial_Dilation}
    Let $f \in D_{\alpha}(\Bn)$ and assume that $f$ does not vanish on $\Bn$. For $0 < r < 1$, define the radial dilation
    \begin{equation*}
        f_r (z) = f(rz).
    \end{equation*}
    If
    \begin{equation*}
        \sup_{0 < r < 1} \left\| \frac{f}{f_r} \right\|_{\alpha} < +\infty,
    \end{equation*}
    then $f$ is cyclic in $D_{\alpha}(\Bn)$.
\end{thm}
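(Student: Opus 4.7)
The strategy is to show that the constant function $1$ lies in the closed cyclic subspace $[f]$ generated by $f$. Once this is done, cyclicity follows from the density of polynomials in $D_\alpha(\Bn)$ together with the continuity of polynomial multiplication: if $p_k f \to 1$ in $D_\alpha(\Bn)$, then for every polynomial $q$ one has $(q p_k) f = q(p_k f) \to q$, so every polynomial lies in $[f]$, and density yields $[f] = D_\alpha(\Bn)$.

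To exhibit $1$ in $[f]$, I would first place each dilate $g_r := f/f_r$ into $[f]$ for $r \in (0,1)$, and then take a weak limit as $r \to 1^-$. Since $f$ is zero-free on $\Bn$, $f_r(z) = f(rz)$ extends holomorphically to $(1/r)\Bn \supsetneq \overline{\mathbb{B}}_n$ and is zero-free on $\overline{\mathbb{B}}_n$, so $1/f_r$ is holomorphic on a neighborhood of $\overline{\mathbb{B}}_n$. Its Taylor polynomials $q_{r,k}$ therefore converge to $1/f_r$ in $\CC^N(\overline{\mathbb{B}}_n)$ for every $N$, and hence, via Corollary~\ref{MultiplierCor}, in the multiplier norm of $\mathcal{M}(D_\alpha(\Bn))$. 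Thus $f_r q_{r,k} \to 1$ in the multiplier norm, and writing $f q_{r,k} = (f_r q_{r,k})\, g_r$ yields
\begin{equation*}
    \| f q_{r,k} - g_r \|_\alpha \;\leq\; \|f_r q_{r,k} - 1\|_{\mathcal{M}(D_\alpha(\Bn))}\, \|g_r\|_\alpha \;\longrightarrow\; 0,
\end{equation*}
so that $g_r \in [f]$ for every $r \in (0,1)$.

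Next, the hypothesis $\sup_r \|g_r\|_\alpha < \infty$ together with the pointwise limit $g_r(z) \to 1$ on $\Bn$ (which follows from $f_r(z) \to f(z)$ and $f(z) \neq 0$) forces $g_r \rightharpoonup 1$ weakly in the reproducing-kernel Hilbert space $D_\alpha(\Bn)$: by reflexivity every subsequence of $(g_r)$ admits a weakly convergent subsubsequence, and the continuity of point evaluations pins its weak limit down to $1$. Since $[f]$ is a norm-closed convex subspace it is weakly closed, whence $1 \in [f]$.

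The main technical obstacle is the claim that $q_{r,k} \to 1/f_r$ in multiplier norm; this rests on the standard continuous embedding $\CC^\infty(\overline{\mathbb{B}}_n) \cap \hol(\overline{\mathbb{B}}_n) \hookrightarrow \mathcal{M}(D_\alpha(\Bn))$, which can be extracted from Corollary~\ref{MultiplierCor} by choosing $k$ with $\alpha - 2k < 0$ and controlling both $\|\cdot\|_\alpha$ and $\|R^k(\cdot)\|_{H^\infty}$ uniformly on the Taylor remainder (available since $1/f_r$ extends holomorphically to a strictly larger ball). Everything else — weak convergence from bounded pointwise convergence in a reproducing-kernel Hilbert space, and weak-closure of convex norm-closed sets — is routine functional analysis.
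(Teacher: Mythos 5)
Your proof is correct and follows the standard argument, which is exactly what the paper defers to \cite{Knese2019_Aniso_Bidisk}: since $f_r$ is zero-free on $\overline{\mathbb{B}}_n$, $1/f_r$ extends holomorphically past the boundary, its Taylor polynomials converge to it in multiplier norm (via the quantitative form of Corollary~\ref{MultiplierCor}), which places $f/f_r$ in $[f]$; then the uniform norm bound combined with pointwise convergence $f/f_r \to 1$ gives weak convergence to $1$, and weak closedness of the closed subspace $[f]$ yields $1\in[f]$ and hence cyclicity.
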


\subsection{Proof of Theorem~\ref{Subspace_Inclusion}} \label{Subsec2.2}

First, we need the following classical result on the solution of the $\bar{\partial}$-problem by Lieb and Range \cite{Lieb_Range_dbar}.
\begin{thm} \label{dbar_thm}
    Let \( \Omega \subset\subset \mathbb{C}^n \) be a strictly pseudoconvex domain with \( \CC^{l+2} \) smooth boundary. Then for each \( q = 1, 2, \ldots, n \), there exist linear operators
    \begin{equation*}
        T_q^* : \CC_{(0,q)}^\infty(\overline{\Omega}) \to \CC_{(0,q-1)}^\infty(\overline{\Omega})
    \end{equation*}
    with the following properties:

    \begin{enumerate}[(i)]
        \item If \( f \in \CC_{(0,q)}^\infty(\overline{\Omega}) \) and \( \bar{\partial} f = 0 \), then
        \begin{equation*}
            \bar{\partial} T_q^* f = f.
        \end{equation*}

        \item[(ii)] If \( f \in \CC_{(0,q)}^l(\overline{\Omega}) \) with \( \bar{\partial} f = 0 \), then for all \( l = 0, 1, 2, \ldots \),
        \begin{equation*}
            \| T_q^* f \|_{\CC^{l , 1/2}(\overline{\Omega})} \leq C_l \| f \|_{\CC^l(\overline{\Omega})}.
        \end{equation*}
    \end{enumerate}
    The constants \( C_l \) depend only on \( l \) and \( \Omega \).
    
    Here, \( \| \cdot \|_{\CC^{l,1/2}(\overline{\Omega})} \) and \( \| \cdot \|_{\CC^l(\overline{\Omega})} \) denote the Hölder norms, obtained by:
    \begin{equation*}
        \|f\|_{{\CC^l(\overline{\Omega})}} = \sum_{|\alpha| \leq l} \sup_{x \in \overline{\Omega}} |D^\alpha f(x)|,
    \end{equation*}
    \begin{equation*}
        \|f\|_{\CC^{l,\varepsilon}(\overline{\Omega})}
        = \|f\|_{\CC^l(\overline{\Omega})}
        + \sum_{|\alpha| = l}\ \sup_{x \neq y}\ 
        \frac{|D^\alpha f(x) - D^\alpha f(y)|}{|x - y|^\varepsilon},
        \qquad 0<\varepsilon<1.
    \end{equation*}
\end{thm}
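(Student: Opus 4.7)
The plan is to realize $T_q^*$ as an explicit Henkin-Ramirez type integral operator, derive $\bar\partial T_q^* f = f$ from a Koppelman homotopy formula, and extract the $1/2$-Hölder gain from a careful pointwise analysis of the kernel.

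The first step is to exploit strict pseudoconvexity to build a global Leray support function. Let $r$ be a $\CC^{k+2}$ strictly plurisubharmonic defining function for $\Omega$. The second-order holomorphic Taylor polynomial
\begin{equation*}
F(\zeta, z) = \sum_j \frac{\partial r}{\partial \zeta_j}(\zeta)(\zeta_j - z_j) + \frac{1}{2} \sum_{j,k} \frac{\partial^2 r}{\partial \zeta_j \partial \zeta_k}(\zeta)(\zeta_j - z_j)(\zeta_k - z_k)
\end{equation*}
is holomorphic in $z$, and the Levi positivity of $r$ gives $2 \operatorname{Re} F(\zeta, z) \ge r(\zeta) - r(z) + c |\zeta - z|^2$ on a neighborhood of the diagonal. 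A holomorphic division argument in the style of Henkin-Ramirez extends $F$ to a function $\Phi(\zeta, z)$, holomorphic in $z$ on a neighborhood of $\overline{\Omega}$, satisfying the same estimate globally on $\partial \Omega \times \overline{\Omega}$ and nonvanishing off the diagonal.

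Using $\Phi$ to build a Leray section $w(\zeta, z)$ with $\langle w, \zeta - z \rangle = \Phi(\zeta, z)$, I would form the Henkin kernel $H_q(\zeta, z)$ on $\partial \Omega$ and the Bochner-Martinelli-Koppelman kernel $B_q(\zeta, z)$ on $\Omega$, and set
\begin{equation*}
    T_q^* f(z) = \int_\Omega f(\zeta) \wedge B_q(\zeta, z) + \int_{\partial \Omega} f(\zeta) \wedge H_q(\zeta, z).
\end{equation*}
The Cauchy-Fantappie-Leray homotopy identity then yields $\bar\partial T_q^* f = f$ for every $\bar\partial$-closed $f \in \CC_{(0,q)}^\infty(\overline{\Omega})$, which takes care of (i). For (ii), I would exploit that the denominator of $H_q$ involves a power of $\Phi$ and that the lower bound on $\operatorname{Re} \Phi$ makes $|\Phi|$ comparable to an anisotropic Koranyi pseudo-distance, in which the complex-normal direction carries twice the weight of the complex-tangential ones. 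Splitting the boundary integration into the Koranyi ball of radius $|z - z'|$ around $z$ and its complement, standard size estimates for negative powers of $|\Phi|$ and for their $z$-derivatives then show that each piece contributes $O(|z - z'|^{1/2})$ to the Hölder increment of $T_q^* f$. For general $k$, one differentiates $k$ times in $z$ under the integral sign, using that $\Phi$ is holomorphic in $z$ so that derivatives fall only on the explicit polynomial factors, and reapplies the same splitting to the new kernels.

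The stability under $\CC^{k+2}$ perturbations is essentially automatic, since $\Phi$ depends only on the $\CC^{k+2}$-jet of $r$ and on the scale of the neighborhood where the Levi positivity holds, both of which survive small perturbations. The main obstacle is the sharpness of the $1/2$ gain: a naive isotropic estimate of $|H_q|$ would only yield continuity of $T_q^* f$. Extracting the Hölder $1/2$ improvement requires genuinely using the anisotropy $|\Phi(\zeta, z)| \asymp |\zeta - z|^2$ in complex tangential directions versus $|\Phi(\zeta, z)| \asymp |\zeta - z|$ in the complex normal direction, and keeping careful track of the cancellations produced by $z$-derivatives of $\Phi$.
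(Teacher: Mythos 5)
The paper does not actually prove Theorem~\ref{dbar_thm}; it is stated verbatim as a known theorem of Lieb and Range \cite{Lieb_Range_dbar} and invoked as a black box in the proof of Theorem~\ref{Mini_Corona}, so there is no in-paper argument against which to compare your sketch. Your plan does follow the correct general framework --- Levi polynomial, Henkin--Ramirez support function $\Phi$, Koppelman--Leray homotopy identity, anisotropic lower bound $|\Phi(\zeta,z)| \gtrsim |\operatorname{Im}\Phi| + |r(z)| + |\zeta-z|^2$, and a Koranyi-ball decomposition --- and both part (i) and the $k=0$ case of part (ii) are handled essentially as you describe.

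There is, however, a genuine gap in your treatment of $k\geq 1$. You write that, because $\Phi$ is holomorphic in $z$, differentiating $k$ times in $z$ under the integral sign makes the derivatives ``fall only on the explicit polynomial factors.'' Holomorphy in $z$ does not mean independence of $z$: each $\partial_{z_j}$-derivative does hit the powers of $\Phi(\zeta,z)$ in the denominator of the Henkin kernel, raising the singularity by one order of $|\Phi|^{-1}$, and similarly for the Bochner--Martinelli part. After $k$ such derivatives the resulting kernel is no longer absolutely integrable on $\partial\Omega$, and no splitting into Koranyi balls can recover an $O(|z-z'|^{1/2})$ increment. The central device in Lieb and Range's paper is precisely what closes this gap: one constructs first-order tangential differential operators in $\zeta$, built from the structure of $\Phi$ and its near-symmetry in $\zeta$ and $z$, so that $D_z K$ agrees with a $D_\zeta$-derivative of a related kernel up to admissible errors of the original singularity order, and then integrates by parts in $\zeta$ to transfer the derivative from the kernel onto $f$. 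Iterating trades $k$ derivatives on $T_q^*f$ for $k$ derivatives on $f$ (absorbed by $\|f\|_{\CC^k(\overline\Omega)}$) while the kernel retains the weak singularity that supplies the $1/2$ gain. Without this commutation-plus-integration-by-parts step the induction on $k$ does not close; your sketch elides the hardest part of the theorem.
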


\begin{defn}
    $A^{l,\varepsilon}(\Bn) = \mathcal{O}(\Bn) \cap \mathcal{C}^{l , \varepsilon}(\overline{\mathbb{B}}_n)$. 
\end{defn}

\begin{thm} \label{Mini_Corona}
    Let $f_1,f_2 \in A^{\infty}(\Bn)$ and $\delta > 0$ be such that $|f_1|^2 + |f_2|^2 \geq \delta$ on $\Bn$. Then, for any $\alpha \in \mathbb{R}$, there exist $g_1,g_2 \in \mathcal{M}(D_\alpha(\Bn))$ and a natural number $k$ satisfying:
    \begin{enumerate} [(i)]
        \item \label{Corona_Property} $f_1 g_1 + f_2 g_2 = 1$.
        \item \label{Corona_Norm_Estimate}
        $\| g_i \|_{\mathcal{M}(D_\alpha(\Bn))} \leq \frac{C}{\delta^k}
        \| f_1 \|_{\mathcal{C}^{k}(\overline{\mathbb{B}}_n)}^{k}
    \| f_2 \|_{\mathcal{C}^{k}(\overline{\mathbb{B}}_n)}^{k}$.
    \end{enumerate}
    for some constant $C>0$ dependent on $k$.
\end{thm}
\begin{proof}
    Let $g_i^0 = \frac{\bar{f}_i}{|f_1|^2 + |f_2|^2}$ for $i = 1,2$. Then $(g_1^0, g_2^0)$ satisfies \eqref{Corona_Property}. Note that
    \begin{equation*}
        \bar{\partial} g_1^0 = f_2 \omega, \quad \bar{\partial} g_2^0 = -f_1 \omega,
    \end{equation*}
    where
    \begin{equation*}
        \omega = \frac{\bar{f}_2 \, \bar{\partial} \bar{f}_1 - \bar{f}_1 \, \bar{\partial} \bar{f}_2}{(|f_1|^2 + |f_2|^2)^2}.
    \end{equation*}

    Moreover, a direct computation shows that $\omega$ is a closed $(0,1)$-form and smooth up to the boundary.

    Let $A = \bar{f}_2 \, \bar{\partial} \bar{f}_1 - \bar{f}_1 \, \bar{\partial} \bar{f}_2$, 
$B = |f_1|^2 + |f_2|^2$, and let $X_1 X_2 \ldots X_l$ be a differential operator of order $l$. 
For any $\{l_1,\ldots,l_m\} \subseteq \{1,\ldots,l\}$, we have
\begin{equation*}
    |X_{l_1} \ldots X_{l_m} A|
    \leq C_m \| f_1 \|_{\mathcal{C}^{m+1}(\overline{\mathbb{B}}_n)}
    \| f_2 \|_{\mathcal{C}^{m+1}(\overline{\mathbb{B}}_n)}
    \leq C_m \| f_1 \|_{\mathcal{C}^{l+1}(\overline{\mathbb{B}}_n)}
    \| f_2 \|_{\mathcal{C}^{l+1}(\overline{\mathbb{B}}_n)},
\end{equation*}
and
\begin{equation*}
    \left|X_{l_1} \ldots X_{l_m}\!\left(\frac{1}{B^2}\right)\right|
    \leq C'_m \left|\frac{1}{B^{m+2}}\right|
    \| B \|_{\mathcal{C}^{m}(\overline{\mathbb{B}}_n)}^{m}
    \leq C''_m \frac{1}{\delta^{m+2}}
    \| f_1 \|_{\mathcal{C}^{m}(\overline{\mathbb{B}}_n)}^{m}
    \| f_2 \|_{\mathcal{C}^{m}(\overline{\mathbb{B}}_n)}^{m}.
\end{equation*}
    This implies 
    \begin{equation*}
        \| \omega \|_{{\mathcal{C}^{l}(\overline{\mathbb{B}}_n)}} \lesssim \frac{1}{\delta^{l+2}} \| f_1 \|_{\mathcal{C}^{l+1}(\overline{\mathbb{B}}_n)}^{l+1}
    \| f_2 \|_{\mathcal{C}^{l+1}(\overline{\mathbb{B}}_n)}^{l+1}.
    \end{equation*}
    By Theorem \ref{dbar_thm}, there exists a solution $\phi_l$ to the problem $\bar{\partial} \phi_l = \omega$ such that
    \begin{equation*}
        \| \phi_l \|_{{\mathcal{C}^{l,\frac{1}{2}}(\overline{\mathbb{B}}_n)}} \lesssim \frac{1}{\delta^{l+2}} \| f_1 \|_{\mathcal{C}^{l+1}(\overline{\mathbb{B}}_n)}^{l+1}
    \| f_2 \|_{\mathcal{C}^{l+1}(\overline{\mathbb{B}}_n)}^{l+1}.
    \end{equation*}

    Now, let $k \in \mathbb{N}$ be such that $\alpha - 2k <0$, we claim that the choice
    \begin{equation*}
        g_1 = g_1^0 - f_2 \phi_k, \quad g_2 = g_2^0 + f_1 \phi_k
    \end{equation*}
    is the desired solution. By construction, it satisfies \eqref{Corona_Property}. Moreover, by Lemma \ref{RLem}, a sufficient condition for $g_1$ and $g_2$ to be in the multiplier space of $D_\alpha(\mathbb{B}_n)$ is that their radial derivatives up to order $k$ are $L^\infty$-bounded. Hence, the norm estimate in \eqref{Corona_Norm_Estimate} follows.
\end{proof}

With the necessary tools in hand, we are now ready to achieve the main goal of this subsection:

\begin{proof} [Proof of Theorem~\ref{Subspace_Inclusion}]
    For $\lambda \in \C$, define
    \begin{equation*}
        \delta(\lambda) = \inf_{z \in \Bn} \left\{ |1 - \lambda g(z)|^2 + |f(z)|^2 \right\}.
    \end{equation*}
    If $|g(z)| \leq \frac{1}{2|\lambda|}$, then $|1 - \lambda g(z)|^2 \geq \frac{1}{4}$. On the other hand, whenever $|g(z)| \geq \frac{1}{2|\lambda|}$, we have $|f(z)| \geq \frac{1}{2|\lambda|}$. Therefore,
    \begin{equation*}
        \delta(\lambda) \geq \min\left\{ \frac{1}{4|\lambda|^2}, \frac{1}{4} \right\}.
    \end{equation*}

    By Theorem \ref{Mini_Corona}, there exist $g_1, g_2 \in \mathcal{M}(D_\alpha(\Bn))$ such that:
    \begin{enumerate}[(A)]
        \item \label{Corona_Solution} $g_1 (1 - \lambda g) + g_2 f \equiv 1$,
        \item $\|g_i\|_{\mathcal{M}(D_\alpha(\Bn))} \lesssim \frac{1}{\delta(\lambda)^k} \| 1- \lambda g\|^k_{\mathcal{C}^{k}(\overline{\mathbb{B}}_n)} \|f\|^k_{\mathcal{C}^{k}(\overline{\mathbb{B}}_n)} \lesssim |\lambda|^{3k}$ for $i = 1,2$.
    \end{enumerate}

    Now let $[f]_{\mathcal{M}(D_\alpha(\Bn))}$ be the closed ideal generated by $f$ in ${\mathcal{M}(D_\alpha(\Bn))}$. Define the standard quotient map
    \begin{equation*}
        \pi : \mathcal{M}(D_\alpha(\Bn)) \rightarrow \mathcal{M}(D_\alpha(\Bn)) / [f]_{\mathcal{M}(D_\alpha(\Bn))}.
    \end{equation*}
    Then by \eqref{Corona_Solution},
    \begin{equation*}
        \pi\left(1 - \lambda g\right)^{-1} = \pi(g_1).
    \end{equation*}

    Thus, \( \pi\left(1 - \lambda g\right)^{-1} \), which is a holomorphic Banach-valued function of \( \lambda \), is defined for every \( \lambda \), so it is an entire function. Hence, we may write:
    \begin{equation}\label{eq:whatever}
        \begin{aligned}
        \Bigl\| \pi\left(1-\lambda g\right)^{-1} \Bigr\|_{\mathcal{M}(D_\alpha(\Bn)) / [f]_{\mathcal{M}(D_\alpha(\Bn))}}
        &= \Bigl\| \pi( \sum_{j \ge 0} \lambda^j g^j) \Bigr\|_{\mathcal{M}(D_\alpha(\Bn)) / [f]_{\mathcal{M}(D_\alpha(\Bn))}} \\
        &= \Bigl\| \sum_{j \ge 0} \lambda^j \pi(g^j) \Bigr\|_{\mathcal{M}(D_\alpha(\Bn)) / [f]_{\mathcal{M}(D_\alpha(\Bn))}} \\
        &= \|\pi(g_1)\|_{\mathcal{M}(D_\alpha(\Bn)) / [f]_{\mathcal{M}(D_\alpha(\Bn))}} \\
        &\le \|g_1\|_{\mathcal{M}(D_\alpha(\Bn))} \lesssim |\lambda|^{3k}.
        \end{aligned}
    \end{equation}

    Now, by Liouville's Theorem, we conclude that $\pi(g^{3k+1}) = 0$. In other words,
    \begin{equation*}
        [g^{3k+1}]_{\mathcal{M}(D_\alpha(\Bn))} \subseteq [f]_{\mathcal{M}(D_\alpha(\Bn))},
    \end{equation*}
    which implies
    \begin{equation*}
        [g^{3k+1}] \subseteq [f].
    \end{equation*}
\end{proof}

\subsection{Peak Sets of $A^{\infty}(\Bn)$} \label{Subsec2.3}

\begin{defn}
    For $\zeta \in \partial \Bn$, the complex tangent space to the sphere is
    defined by
    \begin{equation*}
        T_\zeta^{\mathbb C} \partial \Bn
        := \{ v \in \mathbb C^n :  \langle v , \zeta \rangle = 0 \}.
    \end{equation*}    
\end{defn}

\begin{defn} \label{Defn: complex tangential}
Let $M \subset \mathbb \partial \Bn$ be a $\CC^k$ real submanifold.
We say that $M$ is \emph{complex tangential} if, for every $\zeta \in M$,
\begin{equation*}
    T_\zeta M \subset T_\zeta^{\mathbb C}\partial \Bn.
\end{equation*}
\end{defn}

\begin{defn}
    Let $M \subset \mathbb \partial \Bn$ be a $\CC^k$ real submanifold.
    We say that $M$ is \emph{totally real} if, for every $\zeta \in M$,
    \begin{equation*}
        T_\zeta M \cap i T_\zeta M = \{0\}.
    \end{equation*}
\end{defn}

\begin{defn} \label{Def: peak set}
     A closed subset \( E \subset \partial \Bn \) is called a \emph{peak set} for \( A^\infty(\Bn) \) if there exists a function \( f \in A^\infty(\Bn) \) such that
    \begin{equation*}
    f(z) = 1 \quad \text{for all } z \in E,
    \end{equation*}
    and
    \begin{equation*}
        |f(z)| < 1 \quad \text{for all } z \in \overline{\mathbb{B}}_n \setminus E.
    \end{equation*}
    Equivalently, \( E \) is a peak set if there exists a function \( g \in A^\infty(\Bn) \) such that
    \begin{equation*}
        g(z) = 0 \quad \text{for all } z \in E,
    \end{equation*}
    and
    \begin{equation*}
        \operatorname{Re} g(z) > 0 \quad \text{for all } z \in \overline{\mathbb{B}}_n \setminus E.
    \end{equation*}
\end{defn}

\begin{thm} [\cite{ChaumatChollet1979ensembles}, Theorem~21] \label{Thm: PeakSet Chaumat Chollet}
     Let $ M \subset \partial \Bn $ be a $ C^{\infty} $ complex tangential manifold of real dimension at most $n-1$. Then every compact subset $ K \subset M $ is a peak set for $ A^{\infty}(\Bn) $.
    
    Moreover, it follows from equation~(21.4) in \cite{ChaumatChollet1979ensembles}, together with the discussion on page~182 therein, that for a corresponding peak function \( g \) in the sense of Definition~\ref{Def: peak set}, there exist a constant \( c>0 \) and an open neighborhood \( U \subset \mathbb{C}^n \) of \( K \) such that
    \begin{equation} \label{Real Part inequality}
        \operatorname{Re} g(z) \geq c \cdot \operatorname{dist}(z, M)^2 \quad \text{for all } z \in \Bn \cap U.
    \end{equation}
\end{thm}

\begin{cor} \label{Peak_Corollary}
    Under the hypotheses of Theorem~\ref{Thm: PeakSet Chaumat Chollet}, for every point \( z_0 \in K \), there exist constants \( c > 0 \) and \( c' > 0 \), and an open neighborhood \( U \subset \mathbb{C}^n \) of \( z_0 \), such that for all \( z \in U \cap \Bn \), we have
    \begin{equation*}
        |g(z)| \geq c \left( u + |v| + \mathrm{dist}(z, M)^2 \right),
    \end{equation*}
    and
    \begin{equation*}
        |g(z)| \leq c'\, \mathrm{dist}(z, M),
    \end{equation*}
    where \( w = u + iv \) are local holomorphic coordinates centered at \( z_0 \), with \( u \) in the inward normal direction to \( \partial \Bn \), and \( v \) in the tangential direction and transverse to $M$ by the hypothesis on $M$.
\end{cor}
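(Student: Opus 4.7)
The plan is to treat the two directions of the inequality separately. For the upper bound $|g(z)| \le c'\operatorname{dist}(z,M)$, I would use the smoothness of $g$ on $\overline{\Omega}$ combined with $g|_M = 0$: for $z$ in a small neighborhood of $z_0$, pick a nearest point $p(z) \in M$ and apply the mean-value estimate $|g(z)| = |g(z)-g(p)| \le \|\nabla g\|_{L^\infty(\overline U)}\,|z-p| = \|\nabla g\|_{L^\infty(\overline U)}\operatorname{dist}(z,M)$, so one may take $c' = \|\nabla g\|_{L^\infty(\overline U)}$.

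For the lower bound, the $\operatorname{dist}(z,M)^2$ piece is immediate from Theorem~\ref{Peak_Set} via $|g| \ge \operatorname{Re} g \ge c\,\operatorname{dist}(z,M)^2$. The delicate part is the linear growth $|g(z)| \gtrsim u+|v|$, which I would extract from the first-order Taylor expansion of $g$ at $z_0$. In local holomorphic coordinates $(w,z')$, with $w=u+iv$ the complex-normal coordinate and $z'$ spanning $T^{\mathbb{C}}_{z_0}(\partial \Omega)$, write $g(z) = aw + \sum_k b_k z'_k + R(z)$ with $R(z) = O(|z-z_0|^2)$. The coefficients of $z'_k$ in directions tangent to $M$ vanish because $g|_M = 0$ (together with holomorphy, which kills the derivatives in the $J$-images of $T_{z_0}M$ as well), and those in the complementary complex-tangential directions vanish because $\operatorname{Re} g \ge 0$ on $\partial \Omega$ attains a minimum at $z_0 \in M$, forcing all first-order real tangential derivatives of $\operatorname{Re} g$ at $z_0$ to be zero. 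The analogous argument along the $iw$-direction (tangent to $\partial \Omega$) forces $a$ to be real. Hopf's boundary-point lemma, applied to the pluriharmonic function $\operatorname{Re} g$ (nonnegative on $\overline{\Omega}$, strictly positive in $\Omega$ since $K\subset\partial \Omega$, vanishing at $z_0$, on a $C^\infty$ boundary satisfying the interior-ball condition), then upgrades $a\ge 0$ to $a>0$ strictly.

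With $g(z) = aw + R(z)$ and $a > 0$, the conclusion follows by combining Theorem~\ref{Peak_Set} with strict pseudoconvexity of $\Omega$: the latter locally forces $u \gtrsim |z'|^2 + v^2$ in $\Omega \cap U$ after shrinking $U$, so $|z-z_0|^2 \lesssim |w|$ on $\Omega\cap U$. In the regime $|z'|^2 \ll |w|$ the linear term $aw$ dominates the remainder $R(z)$, giving $|g(z)| \gtrsim |w| \gtrsim u+|v|$; in the complementary regime $|z'|^2 \asymp |w|$ the distance $\operatorname{dist}(z,M)$ is comparable to $\sqrt{|w|}$, and $|g| \ge c\,\operatorname{dist}(z,M)^2 \gtrsim |w| \gtrsim u+|v|$ takes over. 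The main obstacle I expect is matching these two regimes with uniform constants; handling it cleanly requires using the positivity of the quadratic form $aL + \operatorname{Re} Q$ on $T^{\mathbb{C}}_{z_0}(\partial \Omega)$ (where $L$ is the Levi form of $\partial \Omega$ at $z_0$ and $Q$ is the quadratic part of $g$), a positivity forced by $\operatorname{Re} g \ge 0$ on $\partial \Omega$ near $z_0$, which permits the higher-order Taylor remainder to be absorbed by the Hopf-type linear bound combined with the Theorem~\ref{Peak_Set} quadratic bound.
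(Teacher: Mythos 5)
Your proposal has the same skeleton as the paper's proof (Hopf's lemma for $\partial_u \operatorname{Re} g(z_0) > 0$, Cauchy--Riemann to transfer this to $\partial_v \operatorname{Im} g(z_0)$, combined with the quadratic estimate \eqref{Real Part inequality} of Theorem~\ref{Peak_Set}), and your identification of which first-order derivatives vanish --- the $T_{z_0}M$ and $J(T_{z_0}M)$ directions via $g|_M \equiv 0$ and holomorphy, the remaining complex-tangential directions via the boundary minimum of $\operatorname{Re} g$ plus Cauchy--Riemann --- is correct and in fact spelled out more carefully than in the paper. The upper bound argument is fine.

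The gap, which you partially flag yourself, is in the final regime-matching step. In your ``complementary regime'' $|z'|^2 \asymp |w|$, you assert $\operatorname{dist}(z,M) \asymp \sqrt{|w|}$, but this is false when the dominant part of $z'$ points along $T_{z_0}M$: in that case $\operatorname{dist}(z,M)$ can be much smaller than $|z'|$, the bound \eqref{Real Part inequality} gives essentially nothing, and the Taylor remainder $R(z)=O(|z-z_0|^2)$ is of the same size as $a|w|$, so the linear term is not protected. Your proposed remedy via positivity of $aL + \operatorname{Re} Q$ is plausible but you do not carry it out, and it is heavier than necessary. The cleaner patch, and I believe the one the paper intends by citing Hopf together with \eqref{Real Part inequality}, is to avoid any Taylor expansion for the real part: since $\operatorname{Re} g \geq 0$ on $\overline{\Omega}$ and $\partial_u \operatorname{Re} g$ is continuous with $\partial_u\operatorname{Re} g(z_0)=a>0$, integrating $\partial_u\operatorname{Re} g$ along the inward normal from the boundary projection $z^{\partial}$ of $z$ gives $\operatorname{Re} g(z)\geq \operatorname{Re} g(z^{\partial}) + \tfrac{a}{2}u \geq \tfrac{a}{2}u$ on $\Omega\cap U$ after shrinking $U$. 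This secures $|g|\gtrsim u$ without ever needing to beat a quadratic remainder; then $|z-z_0|^2 \lesssim u$ (strict pseudoconvexity) together with your expansion $\operatorname{Im} g = av + O(|z-z_0|^2)$ yields $|g|\geq a|v| - Cu \geq a|v| - \tfrac{2C}{a}|g|$, hence $|g|\gtrsim |v|$, and \eqref{Real Part inequality} supplies the $\operatorname{dist}(z,M)^2$ term directly.
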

\begin{proof}
    Note that \( \operatorname{Re} g \) is a non-constant pluriharmonic function that attains its absolute minimum on the boundary point \( z_0 \). By Hopf's Lemma, we have
    \begin{equation*}
        \left. \frac{\partial}{\partial u} \operatorname{Re} g \right|_{z_0} \neq 0.
    \end{equation*}
    Applying the Cauchy–Riemann equations then yields
    \begin{equation*}
        \left. \frac{\partial}{\partial v} \operatorname{Im} g \right|_{z_0} = \left. \frac{\partial}{\partial u} \operatorname{Re} g \right|_{z_0} \neq 0.
    \end{equation*}
    This, together with \eqref{Real Part inequality}, completes the proof.
\end{proof}

\section{Proof of the Main Result} \label{Section3}

\begin{lem} \label{lem: Rineq}
        Let $g \in A^\infty(\Bn)$ be non-vanishing on $\Bn$ then for every positive integer $l$ there exists a constant $C_l>0$ independent of $r$ such that we have:
        \begin{equation}
            |R^l \frac{g}{g_r}| \leq  \frac{C_l(1-r)}{|g_r|^{l+1}}
        \end{equation}
    \end{lem}
\begin{proof}
    Let $\phi = g_r Rg - g Rg_r$. For an integer $0 \le t \le l$, we may write:
    \begin{equation} \label{eq: Rineq1}
    \begin{aligned}
        |R^t \phi|
        &= \left| R^t\left((g_r-g)Rg + g(Rg-Rg_r)\right) \right| \\
        &\le \left| R^t\left((g_r-g)Rg\right) \right|
        + \left| R^t\left(g(Rg-Rg_r)\right) \right| \\
        &\le \sum_{s=0}^t
        \left|R^s(g-g_r)\right|
        \left|R^{t+1-s}g\right|
        + \sum_{s=0}^t
        \left|R^s(Rg-Rg_r)\right|
        \left|R^{t-s}g\right|.
    \end{aligned}
    \end{equation}

    Since $g$ and all its derivatives are bounded, there exists $C>0$ such that
    \begin{equation} \label{eq: Rineq2}
        |R^s g|(z) \le C,
    \end{equation}
    for all $z \in \overline{\mathbb{B}}_n$ and all $s \in \{1,\ldots,l+1\}$. Also, for $s \in \{1,\ldots,l+1\}$,
    \begin{equation} \label{eq: Rineq3}
        |R^s(g-g_r)(z)|
        = |R^s g(z)-R^s g(rz)|
        \le \sup_{w \in \mathbb{B}_n} |\nabla (R^s g)(w)|\, |z-rz|
        \le C'(1-r),
    \end{equation}
    where $C'>0$ depends only on $l$ and $g$. From \eqref{eq: Rineq1}, \eqref{eq: Rineq2}, and \eqref{eq: Rineq3}, we infer that
        \begin{equation*}
            |R^t \phi| \le C'_l (1-r).
        \end{equation*}
    Noting that $|R^l \dfrac{g}{g_r}| = |R^{l-1} \dfrac{\phi}{g_r^2}|$ finishes the proof.
\end{proof}

\begin{lem} \label{cyc_g_lem}
    Let $M \subset \partial \Bn$, $K \subseteq M$, and $g$ be as in Theorem~\ref{Thm: PeakSet Chaumat Chollet}, with $m \coloneqq \dim_{\R} M \leq n-1$. Then $\mathcal{Z}(g) \cap \overline{\mathbb{B}}_n = K$, and $g$ is cyclic in $D_\alpha(\Bn)$ for any $\alpha \leq \alpha_c = \frac{2n-m}{2}$.
\end{lem}
\begin{proof}
    The fact that $\mathcal{Z}(g) \cap \overline{\mathbb{B}}_n = K$ comes directly from the fact that $g$ is a peak function for the set $K$. We will show that this function is cyclic in \( D_{\alpha_c}(\Bn) \). Note that this proves the cyclicity of $g$ in $D_{\alpha}(\Bn)$ for any $\alpha \leq \alpha_c$. Indeed, this follows from the fact that in this case $\|\cdot\|_{\alpha} \leq \|\cdot\|_{\alpha_c}$. Hence, if there exists a sequence of polynomials $(p_n)$ such that $\|1 - p_n g\|_{\alpha_c} \to 0$, then we also have $\|1 - p_n g\|_{\alpha} \to 0$.

    Let \( r \in (0,1) \), and define \( g_r(z) = g(rz) \). Our goal is to prove that 
    \begin{equation*} 
       \sup_{r \in (0,1)} \left\| \frac{g}{g_r} \right\|_{\alpha_c} < \infty,
    \end{equation*}
    so that we can invoke Theorem~\ref{Radial_Dilation} to conclude the proof.

    Let \( l \) be a sufficiently large positive integer such that \( 2l - \alpha_c > 0 \). Then, by Lemma~\ref{RLem}, we have:
    \begin{equation} \label{g/gr norm eq}
        \left\| \frac{g}{g_r} \right\|^2_{\alpha_c} \asymp \int_{\Bn} (1 - |z|^2)^{2l - \alpha_c - 1} \left| R^l \left( \frac{g}{g_r} \right) \right|^2 \, dv(z).
    \end{equation}

    Applying Lemma~\ref{lem: Rineq} to \eqref{g/gr norm eq}, we obtain:
    \begin{equation}
        \left\| \frac{g}{g_r} \right\|^2_{\alpha_c} \lesssim (1 - r)^2 \int_{\Bn} (1 - |z|^2)^{2l - \alpha_c - 1} \left| \frac{1}{g_r} \right|^{2l + 2} \, dv(z).
    \end{equation}

    Now, for $p \in M$, let $Z_p(x,y,w)$ be a system of local coordinates at $p$, defined on an open neighborhood $U_p$ of $p$, where $x \in \mathbb{R}^m$, $y \in \mathbb{R}^{2n-m-2}$, and $w = u+iv \in \mathbb{C}$, such that
    \[
        Z_p(x,0,0) \in M, \qquad
        Z_p(x,y,0) \in \partial \mathbb{B}_n,
    \]
    and where $w$ is as in Corollary~\ref{Peak_Corollary}.
    We may choose $0<r_p<1$ sufficiently large such that, for any $r_p \le r <1$, we have $rZ_p \in U_p$. Then there exists $c_p>0$ such that
    \begin{equation} \label{g/gr eq2}
        |g_r(Z_p)| \ge c_p \left( (1-r) + u + |v| + |y|^2 \right).
    \end{equation}

    Since $M$ is compact, we may cover it by finitely many such neighborhoods $U_p$. This allows us to estimate the integral in \eqref{g/gr norm eq} in the following form:
    \begin{equation} \label{g/gr eq3}
        \left\| \frac{g}{g_r} \right\|^2_{\alpha_c}
        \lesssim (1 - r)^2 \int_{(0,1)^m} \int_{(-1,1)^{2n - m - 2}} \int_{(-1,1)} \int_{(0,1)} 
        \frac{u^{2l - \alpha_c - 1}}{((1 - r) + u + |v| + |y|^2)^{2l + 2}} \, du\, dv\, dy\, dx.
    \end{equation}

    Letting \( u = u^\gamma \) where \( \gamma = 2l - \alpha_c \), we may rewrite \eqref{g/gr eq3} as
    \begin{align*} 
        \left\| \frac{g}{g_r} \right\|^2_{\alpha_c}
        &\lesssim (1 - r)^2 \int_{(0,1)^m} \int_{(-1,1)^{2n - m - 2}} \int_{(-1,1)} \int_{(0,1)} 
        \frac{1}{((1 - r) + u^{1/\gamma} + |v| + |y|^2)^{2l + 2}} \, du\, dv\, dy\, dx \\
        &\asymp (1 - r)^2 \int_{(0,1)^m} \int_{(-1,1)^{2n - m - 2}} \int_{(-1,1)} \int_{(0,1)} 
        \frac{1}{((1 - r)^\gamma + u + |v|^\gamma + |y|^{2\gamma})^{\frac{2l + 2}{\gamma}}} \, du\, dv\, dy\, dx \\
        &\asymp (1 - r)^2 \int_{(0,1)^m} \int_{(-1,1)^{2n - m - 2}} \int_{(-1,1)}
        \frac{1}{((1 - r)^\gamma + |v|^\gamma + |y|^{2\gamma})^{\frac{2l + 2}{\gamma} - 1}} \, dv\, dy\, dx \\
        &\asymp (1 - r)^2 \int_{(0,1)^m} \int_{(-1,1)^{2n - m - 2}} \int_{(-1,1)}
        \frac{1}{((1 - r) + |v| + |y|^2)^{\alpha_c + 2}} \, dv\, dy\, dx \\
        &\asymp (1 - r)^2 \int_{(0,1)^m} \int_{(-1,1)^{2n - m - 2}} \int_{(0,1)}
        \frac{1}{((1 - r) + v + |y|^2)^{\alpha_c + 2}} \, dv\, dy\, dx \\
        &\asymp (1 - r)^2 \int_{(0,1)^m} \int_{(-1,1)^{2n - m - 2}}
        \frac{1}{((1 - r) + |y|^2)^{\alpha_c + 1}} \, dy\, dx \\
        &= (1 - r)^2 \int_{(0,1)^m} \int_{(-1,1)^{2n - m - 2}}
        \frac{1}{((1 - r) + y_1^2 + \cdots + y_{2n - m - 2}^2)^{\alpha_c + 1}} \, dy\, dx \\
        &\asymp (1 - r)^2 \int_{(0,1)^m} \int_{(0,1)^{2n - m - 2}}
        \frac{1}{((1 - r)^{1/2} + y_1 + \cdots + y_{2n - m - 2})^{2n - m + 2}} \, dy\, dx \\
        &\asymp (1 - r)^2 \int_{(0,1)^m}
        \frac{1}{(1 - r)^{2}} \, dx \\
        &= 1.
    \end{align*}
\end{proof}

\begin{proof}[Proof of Theorem~\ref{MainThm}]
    From Theorem~\ref{Thm: PeakSet Chaumat Chollet}, we know that \( \mathcal{Z}(f) \cap \overline{\mathbb{B}}_n = M \) is a peak set. Therefore, by Lemma~\ref{cyc_g_lem}, there exists \( g \in A^\infty(\Bn) \) such that \( \mathcal{Z}(g) \cap \overline{\mathbb{B}}_n = M \), and \( g \) is a cyclic multiplier of \( D_{\frac{2n - m}{2}}(\Bn) \).

    From the construction of $g$, we obtain:
    \begin{equation*}
        |g(z)| \lesssim \operatorname{dist}(z, M).
    \end{equation*}

    On the other hand, Łojasiewicz’s inequality \cite{KrantzParks2002} gives that there exists a positive integer \( l \) and a constant \( C > 0 \) such that for all $z \in \overline{\mathbb{B}}_n$ we have:
    \begin{equation*}
        |f(z)| \geq C \operatorname{dist}(z, M)^l.
    \end{equation*}
    Therefore, there exists $c>0$ such that we have on $\Bn$:
    \begin{equation*}
        c |g^l(z)| \le |f(z)|.
    \end{equation*}
    Replacing $g$ by $c g^l$ and keeping the notation $g$, we may assume that $f$ and $g$ satisfy the assumptions of Theorem~\ref{Subspace_Inclusion}. Hence, there exists $N \in \mathbb{N}$ such that
    \begin{equation*}
        [g^N] \subseteq [f].
    \end{equation*}
    Finally, since $g$ is cyclic (and therefore so is $g^N$, as $g$ is a multiplier), the proof is complete.
\end{proof}

\section*{Acknowledgements}
The author would like to express sincere gratitude to Łukasz Kosiński and Pascal Thomas, the author’s PhD advisors, for their invaluable guidance and ongoing support throughout this work. The author is particularly grateful to Pascal Thomas for his substantial help in the development of the present idea, and to Łukasz Kosiński for his continued insightful advice and encouragement. Finally, the author acknowledges the Institut de Mathématiques de Toulouse and the Department of Mathematics at Jagiellonian University for providing a stimulating research environment.

\bibliographystyle{plain} 

\begin{thebibliography}{99}

\bibitem{AlemanPerfektRichterSundbergSunkes}
A.~Aleman, K.-M.~Perfekt, S.~Richter, C.~Sundberg, and J.~Sunkes,
\newblock Cyclicity in the Drury--Arveson space and other weighted Besov spaces,
\newblock \emph{Trans. Amer. Math. Soc.} \textbf{377} (2024), no.~2, 1273--1298.

\bibitem{BS84}
L.~Brown and A.~L.~Shields,
\newblock Cyclic vectors in the Dirichlet space,
\newblock \emph{Trans. Amer. Math. Soc.} \textbf{285} (1984), 269--303.

\bibitem{Beneteau2015_One_Variable}
C.~B{\'e}n{\'e}teau, A.~A.~Condori, C.~Liaw, D.~Seco, and A.~A.~Sola,
\newblock Cyclicity in Dirichlet-type spaces and extremal polynomials,
\newblock \emph{J. Anal. Math.} \textbf{126} (2015), no.~1, 259--286.

\bibitem{Beneteau2016_bidisk}
C.~B{\'e}n{\'e}teau, G.~Knese, {\L}.~Kosi{\'n}ski, C.~Liaw, D.~Seco, and A.~Sola,
\newblock Cyclic polynomials in two variables,
\newblock \emph{Trans. Amer. Math. Soc.} \textbf{368} (2016), no.~12, 8737--8754.

\bibitem{Beurling1949two}
A.~Beurling,
\newblock On two problems concerning linear transformations in Hilbert space,
\newblock \emph{Acta Math.} \textbf{81} (1949), 239--255.

\bibitem{ChaumatChollet1979ensembles}
J.~Chaumat and A.-M.~Chollet,
\newblock Ensembles pics pour $A^\infty(D)$,
\newblock \emph{Ann. Inst. Fourier} \textbf{29} (1979), no.~3, 171--200.

\bibitem{ELFALLAH2016_cyc}
O.~El-Fallah, Y.~Elmadani, and K.~Kellay,
\newblock Cyclicity and invariant subspaces in Dirichlet spaces,
\newblock \emph{J. Funct. Anal.} \textbf{270} (2016), no.~9, 3262--3279.

\bibitem{ElFallah2014_book}
O.~El-Fallah, K.~Kellay, J.~Mashreghi, and T.~Ransford,
\newblock \emph{A Primer on the Dirichlet Space},
\newblock Cambridge Univ. Press, 2014.

\bibitem{ElFallah_BS_Conj}
O.~El-Fallah, K.~Kellay, and T.~Ransford,
\newblock On the Brown--Shields conjecture for cyclicity in the Dirichlet space,
\newblock \emph{Adv. Math.} \textbf{222} (2009), no.~6, 2196--2214.

\bibitem{Kellay2024_Corona_Cyclicity}
Y.~Egueh, K.~Kellay, and M.~Zarrabi,
\newblock Cyclicity in Besov--Dirichlet spaces from the corona theorem,
\newblock arXiv:2405.09921.

\bibitem{Kellay_Cyc_2020}
K.~Kellay, F.~Le~Manach, and M.~Zarrabi,
\newblock Cyclicity in Dirichlet-type spaces,
\newblock in \emph{Complex Analysis and Spectral Theory},
\newblock Contemp. Math., vol.~743, Amer. Math. Soc., 2020, pp.~181--193.

\bibitem{Knese2019_Aniso_Bidisk}
G.~Knese, {\L}.~Kosi{\'n}ski, T.~J.~Ransford, and A.~A.~Sola,
\newblock Cyclic polynomials in anisotropic Dirichlet spaces,
\newblock \emph{J. Anal. Math.} \textbf{138} (2019), no.~1, 23--47.

\bibitem{KnesePascoeSola2026_Stable}
G.~Knese, J.~E.~Pascoe, and A.~Sola,
\newblock Stable polynomials and bounded rational functions in the unit ball,
\newblock arXiv:2602.21051.

\bibitem{Kosinski_Vavitsas2023}
{\L}.~Kosi{\'n}ski and D.~Vavitsas,
\newblock Cyclic polynomials in Dirichlet-type spaces in the unit ball of $\mathbb{C}^2$,
\newblock \emph{Construct. Approx.} \textbf{58} (2023), no.~2, 343--361.

\bibitem{KrantzParks2002}
S.~G.~Krantz and H.~R.~Parks,
\newblock \emph{A Primer of Real Analytic Functions},
\newblock 2nd ed., Birkh{\"a}user, 2002.

\bibitem{Lieb_Range_dbar}
I.~Lieb and R.~M.~Range,
\newblock Lösungsoperatoren für den Cauchy--Riemann-Komplex mit $C^k$-Abschätzungen,
\newblock \emph{Math. Ann.} \textbf{253} (1980), 145--164.

\bibitem{Tolokonnikov_Corona}
V.~A.~Tolokonnikov,
\newblock The corona theorem in algebras of bounded analytic functions,
\newblock \emph{Amer. Math. Soc. Transl.} \textbf{149} (1991), 61--93.

\bibitem{TorkinejadZiarati2025_PoletskyStessin}
P.~Torkinejad~Ziarati,
\newblock Cyclicity in Poletsky--Stessin weighted Bergman spaces,
\newblock arXiv:2507.03767.

\bibitem{VavitsasNonCyc}
	D.~Vavitsas and K.~Zarvalis,
	\newblock Non-cyclicity and polynomials in Dirichlet-type spaces of the unit ball,
	\newblock \emph{Bull. Lond. Math. Soc.} \textbf{56} (2024), no.~12, 3905--3919.

\bibitem{ZhuZhao}
R.~Zhao and K.~Zhu,
\newblock \emph{Theory of Bergman Spaces in the Unit Ball of $\mathbb{C}^n$},
\newblock Mém. Soc. Math. France (2), no.~115, 2008.


\end{thebibliography}

\end{document}